\renewcommand{\(}{\left\(}
\renewcommand{\)}{\right\)}
\renewcommand{\[}{\left\[}
\renewcommand{\]}{\right\]}
\numberwithin{equation}{section}
\theoremstyle{plain}
\newtheorem{theorem}{Theorem}[section]
\newtheorem{lemma}[theorem]{Lemma}
\newtheorem{remark}[theorem]{Remark}
\newtheorem{problem}[theorem]{Problem}
\newtheorem{corollary}[theorem]{Corollary}
\def\proof{\@ifnextchar[{\@oproof}{\@nproof}}
\def\@oproof[#1][#2]{\trivlist\item[\hskip\labelsep\textit{#2 Proof of\
		#1.}~]\ignorespaces}
\def\@nproof{\trivlist\item[\hskip\labelsep\textit{Proof.}~]\ignorespaces}
\begin{document}
	
	\title[Shifted finite differences of the overpartition function]{Asymptotics of the shifted finite differences of the overpatition function and a problem of Wang--Xie--Zhang} 
	
	\author{Gargi Mukherjee}
	\address{School of Mathematical Sciences,
		National Institute of Science Education and Research, Bhbaneswar, An OCC of Homi Bhabha National Institute,  Via- Jatni, Khurda, Odisha- 752050, India}
	\email{gargi@niser.ac.in}
	
	\subjclass[2020]{05A16, 05A20, 11P82.}
	\keywords{Overpartitions, Finite differences, Asymptotic expansion}	
	\maketitle
	
	\begin{abstract}
Let $\overline{p}(n)$ denote the overpartition function, and for $j\in \mathbb{N}$, $\Delta^r_j$ denote the $r$-fold applications of the shifted difference operator $\Delta_j$ defined by $\Delta_j(a)(n):=a(n)-a(n-j)$. The main goal of this paper is to derive an asymptotic expansion of $\Delta^r_j(\overline{p})(n)$ with an effective error bound which subsequently gives an answer to a problem of Wang, Xie, and Zhang. In order to get the asymptotics of $\Delta^r_j(\overline{p})(n)$, we  derive an asymptotic expansion of the shifted overpartition function $\overline{p}(n+k)$ for any integer $k\neq 0$. 

	\end{abstract}

\section{Introduction}\label{intro}
An overpartition of a positive integer $n$, introduced by Corteel and Lovejoy \cite{CorteelLovejoy}, is a non-increasing sequence of positive integers whose sum is $n$ in which the first occurrence of a number may be overlined, $\overline{p}(n)$ denotes the number of overpartitions of $n$, and $\overline{p}(0):=1$. For instance, there are $8$ overpartitions of $3$ enumerated by $$3, \overline{3}, 2+1, \overline{2}+1, 2+\overline{1}, \overline{2}+\overline{1},1+1+1, \overline{1}+1+1.$$
Analogous to the celebrated Hardy-Ramanujan-Rademacher formula for the partition function $p(n)$ (see \cite{HardyRamanujan,Rademacher}), Zuckerman's \cite{Zuckerman} formula for $\overline{p}(n)$ states that
\begin{equation}\label{Zuckerman}
\overline{p}(n)=\frac{1}{2\pi}\underset{2 \nmid k}{\sum_{k=1}^{\infty}}\sqrt{k}\underset{(h,k)=1}{\sum_{h=0}^{k-1}}\dfrac{\omega(h,k)^2}{\omega(2h,k)}e^{-\frac{2\pi i n h}{k}}\dfrac{d}{dn}\left(\dfrac{\sinh \frac{\pi \sqrt{n}}{k}}{\sqrt{n}}\right),
\end{equation}
where
\begin{equation*}
\omega(h,k)=\text{exp}\left(\pi i \sum_{r=1}^{k-1}\dfrac{r}{k}\left(\dfrac{hr}{k}-\left\lfloor\dfrac{hr}{k}\right\rfloor-\dfrac{1}{2}\right)\right)
\end{equation*}
for $(h,k)\in \mathbb{Z}_{\geq 0}\times\mathbb{Z}_{\geq 1}$. Similar to the work done by Lehmer \cite{Lehmer} for $p(n)$, Engel \cite{Engel} estimated an error term for $\overline{p}(n)$ as
\begin{equation}\label{Engel1}
\overline{p}(n)=\frac{1}{2\pi}\underset{2 \nmid k}{\sum_{k=1}^{N}}\sqrt{k}\underset{(h,k)=1}{\sum_{h=0}^{k-1}}\dfrac{\omega(h,k)^2}{\omega(2h,k)}e^{-\frac{2\pi i n h}{k}}\dfrac{d}{dn}\left(\dfrac{\sinh \frac{\pi \sqrt{n}}{k}}{\sqrt{n}}\right)+R_{2}(n,N),
\end{equation}
with
\begin{equation}\label{Engel2}
\left|R_{2}(n,N)\right|< \dfrac{N^{5/2}}{\pi n^{3/2}} \sinh \left(\dfrac{\pi \sqrt{n}}{N}\right).
\end{equation}

Let $\Delta$ be the forward difference operator defined by $\Delta(a)(n):=a(n)-a(n-1)$ for a sequence $(a(n))_{n\ge 0}$ and $\Delta^r$ be the $r$-fold applications of $\Delta$ defined as $\Delta^ra(n):=\Delta\left(\Delta^{r-1} a(n)\right)$ for $r\in \mathbb{N}$. In 1977, Good \cite{Good} proposed a problem on $\Delta^r (p)(n)$ which states that for $r\ge 3$, $\left(\Delta^r (p)(n)\right)_{n\ge 1}$ is a sequence of alternating terms for a certain range $1\le n\le n_0(r)$ and then become positive. Using Hardy-Ramanujan-Rademacher \cite{HardyRamanujan,Rademacher} formula for $p(n)$, Gupta proved that $\Delta^r (p)(n)>0$ for all $n>n_0(r)$ but without any explicit estimate for $n_0(r)$ and conjectured that $n_0(r)\sim r^3$ as $r\rightarrow\infty$. Odlyzko \cite{Odlyzko} settled the Good-Gupta conjecture by proving that $(-1)^n \Delta^r (p)(n)\ge 0$ for all $1\le n<n_0(r)$ and $\Delta^r (p)(n)\ge 0$ for $n\ge n_0(r)$ with $n_0(r)\sim \frac{6}{\pi^2}r^2 (\log r)^2$ as $r\rightarrow\infty$. Remarkably, an exact equation for $n_0(r)$ was given by Knessl and Keller \cite{KnesslKeller} which subsequently refined the asymptotic estimation of $n(r)$ by Odlyzko mentioned before. Later Almkvist \cite{Almkvist} obtained an exact formula for $\Delta^r(p)(n)$. Recently, Gomez, Males, and Rolen \cite{GomezMalesRolen} considered a generalized version of the difference operator $\Delta$ and defined {\it $j$-shifted difference} by $\Delta_j (a)(n):=a(n)-a(n-j)$ (note that, for $j=1$, we retrieve the classical $\Delta$ operator) and obtained an asymptotic expansion (up to first 4 terms) of $\Delta^r (p)(n)$ along with an explicit estimation of the error term (cf. \cite[Theorem 1.1]{GomezMalesRolen}) and consequently proved that $\Delta^2_j (p)(n)>0$ for all $n> 16j^2+1/24$ (cf. \cite[Theorem 1.2]{GomezMalesRolen}). A combinatorial proof of $\Delta^2_jp(n)>0$ is given independently by Gomez, Males, and Rolen \cite{GomezMalesRolen} and Banerjee \cite{Banerjee2}. Banerjee \cite[Section 7.2]{Banerjee0} studied the asymptotic expansion of $\Delta^r_j(p)(n)$ but without any estimation of the error bound. Regarding the logarithmic variation of finite differences of the partition function, denoted by $(-1)^{r-1}\Delta^r(\log p)(n)$, the positivity phenomena similar to the difference of partition function, first studied in detail by Chen, Wang, and Xie \cite[Theorem 3.1 and Theorem  4.1]{ChenWangXie} and settled a conjecture of DeSalvo and Pak \cite[Conjecture 4.3]{DeSalvoPak}. Banerjee \cite[Theorem 4.9]{Banerjee1} obtained an asymptotic expansion for $(-1)^{r-1}\Delta^r(\log p)(n)$.

In context of the overpartition function, Wang, Xie, and Zhang \cite{WangXieZhang} studied the positivity of the higher order differences $\Delta^r(\overline{p})(n)$ (cf. \cite[Theorems 2.1 and 2.3]{WangXieZhang}) and its logarithmic variation $(-1)^{r-1}\Delta^r(\log \overline{p})(n)$ (cf. \cite[Theorem 3.1 and Theorem 4.1]{WangXieZhang}). The author \cite{Mukherjee} settled a problem of Wang, Xie, and Zhang \cite[Problem 3.4]{WangXieZhang} by proving an asymptotic lower bound of $(-1)^{r-1}\Delta^r(\log \overline{p})(n)$. A combinatorial proof of $\Delta^2_j(\overline{p})(n)>0$ is given in \cite{Banerjee2}. The key motivation of this paper arises from the following problem of Wang, Xie, and Zhang \cite[Problem 2.4]{WangXieZhang}. Before we state the problem, let us state the result on $\Delta^r (\overline{p})(n)$ by Wang, Xie, and Zhang.
\begin{theorem}\cite[Theorem 2.1 and Theorem 2.3]{WangXieZhang}
For $r\ge 1$, there exists a positive integer $n(r)$ such that for all $n\ge n(r)$,
\begin{equation}\label{wangxiezhangineq}
0<\Delta^r \left(\overline{p}\right)(n)<2^{r-3}\left(1-2^{-\frac{3}{2}}\right)\zeta\left(\frac{3}{2}\right)\frac{e^{\pi\sqrt{n+r}}}{n+r},
\end{equation}	
where $\zeta$ is the Riemann zeta function.
\end{theorem}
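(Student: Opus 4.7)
The plan is to combine Zuckerman's formula (\ref{Zuckerman}) with the Engel error estimate (\ref{Engel2}), and then to apply the finite difference operator $\Delta^{r}$ by viewing it, on each smooth summand, as an $r$-fold integral of the $r$th derivative. The positive lower bound and the explicit upper bound in (\ref{wangxiezhangineq}) will be handled by essentially independent arguments.

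\emph{Positivity.} The Bessel-like function $g(x):=\sinh(\pi\sqrt{x})/\sqrt{x}=\sum_{j\ge 0}\pi^{2j+1}x^{j}/((2j+1)!)$ has strictly positive Taylor coefficients, so all of its derivatives $g^{(m)}(x)$ are strictly positive on $(0,\infty)$. Denoting the $k=1$ contribution to (\ref{Zuckerman}) by $T(n):=g'(n)/(2\pi)$, we therefore have $T^{(r)}(x)>0$ for all $x>0$, and the standard integral representation
\begin{equation*}
\Delta^{r}f(n)=\int_{0}^{1}\!\!\cdots\!\int_{0}^{1}f^{(r)}\!\bigl(n-t_{1}-\cdots-t_{r}\bigr)\,dt_{1}\cdots dt_{r}
\end{equation*}
gives $\Delta^{r}T(n)>0$ for every $n\ge r+1$. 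Writing $\overline{p}(n)=T(n)+S(n)$, where $S(n)$ collects all odd $k\ge 3$ summands of Zuckerman's formula, each summand of $S$ is likewise smooth in $n$, so the same integral identity combined with $|\omega(h,k)|=1$ and either (\ref{Engel2}) applied at $N=3$ or a termwise Kloosterman estimate yields $|\Delta^{r}S(n)|\ll e^{\pi\sqrt{n}/3}$, which is exponentially smaller than $\Delta^{r}T(n)\asymp e^{\pi\sqrt{n}}n^{-1-r/2}$. Hence $\Delta^{r}\overline{p}(n)>0$ for all $n\ge n(r)$ for an effective threshold $n(r)$.

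\emph{Upper bound.} Expanding $\Delta^{r}$ binomially and using the monotonicity of $\overline{p}$ gives
\begin{equation*}
\Delta^{r}\overline{p}(n)\le \sum_{k=0}^{r}\binom{r}{k}\overline{p}(n-k)\le 2^{r}\,\overline{p}(n),
\end{equation*}
so it suffices to prove $\overline{p}(n)\le \tfrac{1}{8}(1-2^{-3/2})\zeta(3/2)\,e^{\pi\sqrt{n+r}}/(n+r)$ for $n$ large. Since $x\mapsto e^{\pi\sqrt{x}}/x$ is increasing for $x\ge 1$, this reduces to the unshifted inequality $\overline{p}(n)\le \tfrac{1}{8}(1-2^{-3/2})\zeta(3/2)\,e^{\pi\sqrt{n}}/n$, which follows from the sharp asymptotic $\overline{p}(n)\sim e^{\pi\sqrt{n}}/(8n)$ (again a consequence of (\ref{Zuckerman})), because $(1-2^{-3/2})\zeta(3/2)=\sum_{k\text{ odd}}k^{-3/2}>1$ supplies ample slack.

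\emph{Main obstacle.} The delicate technical point lies in the positivity argument: making the bound on $|\Delta^{r}S(n)|$ quantitative. Each summand $T_{k}(n)$ of $S(n)$ is smooth, and its $r$-th derivative carries the gain factor $(\pi/(2k\sqrt{n}))^{r}$ together with the envelope $e^{\pi\sqrt{n}/k}$, but the Kloosterman-type coefficients contribute up to $k^{3/2}$ under the trivial bound $|\omega(h,k)|=1$, so direct summation over odd $k\ge 3$ runs into a borderline-divergent series. One therefore truncates via (\ref{Engel2}) with $N$ chosen as a function of $n$ and $r$, estimates the finite ``main'' sum termwise using the smoothness of each $T_{k}$, and absorbs the truncation error through the exponential dominance of $e^{\pi\sqrt{n}}$ over $e^{\pi\sqrt{n}/N}$. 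Carrying out this bookkeeping so as to produce an explicit threshold $n(r)$ is where the bulk of the work lies.
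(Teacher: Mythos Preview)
The paper does not prove this statement at all: it is quoted verbatim as \cite[Theorem~2.1 and Theorem~2.3]{WangXieZhang} and serves purely as background motivation for Problem~\ref{wangxiezhangprob}. There is therefore no ``paper's own proof'' against which to compare your proposal.

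That said, your sketch is a reasonable outline of how such a result is obtained, and indeed the paper's own Theorem~\ref{thm1.2} (specialised to $j=1$) supplies a cleaner route to the positivity half: the asymptotic expansion there shows $\Delta^{r}(\overline{p})(n-1)\sim (\pi/2)^{r}e^{\pi\sqrt{n}}/(8n^{1+r/2})$, which is eventually positive with an effective threshold. Your upper-bound argument via $|\Delta^{r}\overline{p}(n)|\le 2^{r}\overline{p}(n)$ together with $\overline{p}(n)\sim e^{\pi\sqrt{n}}/(8n)$ and the numerical fact $(1-2^{-3/2})\zeta(3/2)>1$ is correct and is the quick way to get the stated upper bound.

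One caution on your positivity argument as written: you yourself flag that the bound on $|\Delta^{r}S(n)|$ is the crux, and your final paragraph only describes the shape of the bookkeeping rather than carrying it out. In particular, ``either (\ref{Engel2}) applied at $N=3$ or a termwise Kloosterman estimate yields $|\Delta^{r}S(n)|\ll e^{\pi\sqrt{n}/3}$'' glosses over the point that Engel's bound (\ref{Engel2}) controls $\overline{p}(n)-\text{(truncated sum)}$, not the $r$th difference of the tail; passing $\Delta^{r}$ through the error term requires the kind of derivative estimates you allude to but do not supply. So as a proof this remains a sketch, though the strategy is sound.
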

This result led Wang, Xie, and Zhang to propose the following problem.
\begin{problem}\label{wangxiezhangprob}\cite[Problem 2.4]{WangXieZhang}
Find a sharp lower bound of $\Delta^r (\overline{p})(n)$. 
\end{problem}
In other words, it is equivalent to find the asymptotic growth of $\Delta^r (\overline{p})(n)$ as $n\rightarrow\infty$. 

In attempting to give an affirmative answer to this question, we first derive an asymptotic expansion of $\overline{p}(n+k)$ for any $k\in \mathbb{Z}\setminus\{0\}$ in Theorem \ref{thm1.1}. Using Theorem \ref{thm1.1}, we obtain the asymptotic expansion of the $r$-fold applications of the $j$-shifted differences of the overpartition function in Theorem \ref{thm1.2} which in turn gives a sharper lower and upper bounds for $\Delta^r_j(\overline{p})(n-j)$. 

Before we state the main results, for $(N,j,r)\in \mathbb{N}^3$ and $k\in \mathbb{Z}\setminus\{0\}$, let us define the following constants depending on parameters $N,j,r,k$ which will be used in stating the main results.
\begingroup
\allowdisplaybreaks
\begin{align}\label{defn1}
N(m) &:=\begin{cases}
	9, &\quad \text{if}\ m=1,\\
10m \log m-m \log \log m, & \quad \text{if}\ m \geq 2,
\end{cases}\\\label{N_0(m)}
N_0(m)&:=\left\lceil \Bigl(\frac{N(m)}{\pi}\Bigr)^2 \right\rceil,\\\label{defn2}
N_1(m,k)&:=\underset{k\in \mathbb{Z}\setminus\{0\}}{\min}\{N_0(m)-k\},\\\label{n(N,k)}
n(N,k)&:=
\begin{cases}
\max \Bigl  \{N_0(N+1),\ 4k, \ k^2, \ \Bigl \lceil \frac{k^2\pi^2}{4} \Bigr \rceil \Bigr \}, &\quad \text{if}\ k>0,\\
\max \Bigl  \{N_0(N+1),\ 4|k| \Bigr \}, & \quad \text{if}\ k<0,
\end{cases}\\\label{errorevencase1}
\widetilde{E}_1(N,k)&:=
\frac{2|k|^{\frac{N}{2}}\sinh(\pi \sqrt{|k|})}{\pi^{3/2}} \Bigl({1+\sqrt{2(N+1)}}\Bigr),\\\label{erroroddcase1}
\widetilde{E}_2(N,k)&:=
\frac{2|k|^{\frac{N}{2}}\cosh(\pi \sqrt{|k|})\sqrt{2(N+2)}}{\pi^{3/2}},\\\label{error1}
\overline{E}_1(N,k)&:=\widetilde{E}_1(N,k)+\widetilde{E}_2(N,k),\\\label{error2}
\widehat{E}_2(N)&=\widehat{E}_2(N,k):=
\begin{cases}
\frac{1+\pi}{\pi^{N+1}}, &\quad \text{if}\ k>0,\\
\frac{2^{\frac{N+3}{2}}}{\pi^{N+1}}, & \quad \text{if}\ k<0,
\end{cases}\\\label{E(N,k)}
E(N,k)&:=\overline{E}_1(N,k)+\widehat{E}_2(N),\\\label{n(N,r,j)}
n(N,r,j)&:=\underset{0\le m\le r}{\max} \{n(N,-(m+1)j)\}\\\label{E(N,r,j)}
E(N,r,j)&:=\sum_{m=0}^{r}\binom{r}{m}E\left(N,-(m+1)j\right).
\end{align} 	
\endgroup 
\begin{theorem}\label{thm1.1}
		Let $n(N,k)$, $E(N,k)$ be as in (\ref{n(N,k)}) and (\ref{E(N,k)}) respectively.. For $N\in \mathbb{N}$, $k \in \mathbb{Z}\setminus\{0\}$, and $n \ge n(N,k)$, we have \begin{equation}\label{eqn thm1.1.}
		\overline{p}(n+k)=\frac{e^{\pi\sqrt{n}}}{8n}\Biggl(\sum_{t=0}^{N} \frac{A_{k}(t)}{n^{\frac{t}{2}}}+O_{\le E(N,k)}\left(n^{-\frac{N+1}{2}}\right)\Biggr),
		\end{equation}
		where \begin{equation}\label{A_{k}(t)}
		A_{k}(t)=\Bigl(\frac{k}{2}\Bigr)^t\sum_{\ell=0}^{\lfloor \frac{t+1}{2} \rfloor} \frac{(-1)^\ell (t+1-\ell)}{(t+1-2\ell)!} \binom{t+1}{\ell}\frac{\pi^{t-2\ell}}{k^\ell}.
		\end{equation}
	\end{theorem}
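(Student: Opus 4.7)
The plan is to apply the Hardy--Ramanujan--Rademacher--Zuckerman machinery in the form of Engel's truncation (\ref{Engel1})--(\ref{Engel2}) directly to $\overline{p}(n+k)$ with a small odd truncation level (say $N_{\mathrm{Engel}}=3$), so that only the modulus-$1$ term survives as the genuine asymptotic contribution. The $\kappa=1$ piece equals
\[
M(m) := \frac{1}{2\pi}\frac{d}{dm}\Bigl(\frac{\sinh(\pi\sqrt{m})}{\sqrt{m}}\Bigr) = \frac{\cosh(\pi\sqrt{m})}{4m} - \frac{\sinh(\pi\sqrt{m})}{4\pi m^{3/2}},
\]
evaluated at $m=n+k$. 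Engel's estimate (\ref{Engel2}) controls $|R_2(n+k,3)|$ by a constant times $e^{\pi\sqrt{n+k}/3}/n^{3/2}$, which is exponentially smaller than the error $n^{-(N+1)/2}\cdot e^{\pi\sqrt{n}}/n$ claimed in (\ref{eqn thm1.1.}) for any fixed $N$; it is absorbed into the $\widehat{E}_2(N,k)$ piece, and the condition $n\ge N_0(N+1)$ built into $n(N,k)$ guarantees this is tight enough.

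Next I would split the exponentials inside $M(n+k)$. Writing $\cosh$ and $\sinh$ in terms of $e^{\pm\pi\sqrt{n+k}}$, the negative-exponential part of $M(n+k)$ is of size $e^{-\pi\sqrt{n+k}}/n$ and is absorbed in the error, while the remaining main piece satisfies
\[
\frac{e^{\pi\sqrt{n+k}}}{8(n+k)}\Bigl(1-\frac{1}{\pi\sqrt{n+k}}\Bigr) = \frac{e^{\pi\sqrt{n}}}{8n}\cdot e^{\pi(\sqrt{n+k}-\sqrt{n})}\cdot\frac{1}{1+k/n}\Bigl(1-\frac{1}{\pi\sqrt{n}\,\sqrt{1+k/n}}\Bigr).
\]
Thus it suffices to expand the right-hand factor in powers of $x:=1/\sqrt{n}$. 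The identity $\pi(\sqrt{n+k}-\sqrt{n}) = \pi x^{-1}(\sqrt{1+kx^2}-1) = \sum_{j\ge 1}\binom{1/2}{j}\pi k^j x^{2j-1}$ converts the exponential factor into a formal power series in $x$, and the two rational factors are standard binomial series in $kx^2$.

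Multiplying the three formal series and collecting the coefficient of $x^t$ produces a polynomial in $k$ and $\pi$; the main computational effort is then to verify that this coefficient coincides with
\[
A_k(t)=\Bigl(\tfrac{k}{2}\Bigr)^t\sum_{\ell=0}^{\lfloor(t+1)/2\rfloor}\frac{(-1)^\ell(t+1-\ell)}{(t+1-2\ell)!}\binom{t+1}{\ell}\frac{\pi^{t-2\ell}}{k^\ell}
\]
as in (\ref{A_{k}(t)}). After rewriting $(t+1-\ell)/(t+1-\ell)! = 1/(t-\ell)!$, I expect the match to reduce to a standard identity for sums of the type $\sum_\ell(-1)^\ell/(\ell!\,(t+1-2\ell)!)$ expressing the coefficients of $\exp\bigl(\pi x^{-1}(\sqrt{1+kx^2}-1)\bigr)$ combined with those of $(1+kx^2)^{-1}$ and $(1+kx^2)^{-1/2}$, in the spirit of the analogous calculation carried out for $p(n+k)$ in \cite{GomezMalesRolen}. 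This combinatorial bookkeeping, especially because the two brackets $1$ and $-1/(\pi\sqrt{n+k})$ mix across every power of $x$, is the main obstacle I foresee.

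Finally, for the effective tail estimate I would cut each formal series at the term contributing to $x^N$ and bound the remainders by Taylor's theorem with Lagrange remainder (for the exponential) and the explicit binomial-series remainder. The hypothesis $n\ge n(N,k)\ge 4|k|$ ensures $|k|/n\le 1/4$, so all binomial series converge geometrically with controllable tails. The bound $|\pi(\sqrt{n+k}-\sqrt{n})|\le \pi\sqrt{|k|}$ (valid whenever $\sqrt{n+k}+\sqrt{n}\ge\sqrt{|k|}$) forces the exponential factor to be pointwise bounded by $e^{\pi\sqrt{|k|}}\le\cosh(\pi\sqrt{|k|})+\sinh(\pi\sqrt{|k|})$, which explains the $\sinh(\pi\sqrt{|k|})$ and $\cosh(\pi\sqrt{|k|})$ factors in $\widetilde{E}_1(N,k)$ and $\widetilde{E}_2(N,k)$; the split between them reflects the parity of the leading remainder term. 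Adding the Taylor-tail $\overline{E}_1(N,k)$, the exponentially small contribution from the $e^{-\pi\sqrt{n+k}}$ piece, and the Engel truncation error (jointly giving $\widehat{E}_2(N)$, whose two sign cases reflect those of $k$) delivers the announced total $E(N,k)$ in (\ref{E(N,k)}), completing the proof of (\ref{eqn thm1.1.}).
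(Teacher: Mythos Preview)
Your overall architecture matches the paper's: start from Engel's truncated formula, isolate the main piece
\[
\frac{e^{\pi\sqrt{n+k}}}{8(n+k)}\Bigl(1-\frac{1}{\pi\sqrt{n+k}}\Bigr),
\]
expand it in powers of $n^{-1/2}$, and control the tails. Where you diverge from the paper is precisely at the step you flag as your main obstacle.

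You propose to multiply three formal series (the exponential, $(1+kx^2)^{-1}$, and $(1+kx^2)^{-1/2}$) and then verify the resulting coefficient of $x^t$ equals $A_k(t)$. The paper avoids this triple convolution altogether by adapting a device of O'Sullivan: writing the whole factor as a power series $\sum_j\xi_j(\alpha)x^j$ in $x=k/n$ with $\alpha=\pi\sqrt{n}$, one integrates the defining equation twice in $x$, which collapses the two rational factors to $1$ and reduces the problem to extracting $[x^{j+2}]$ from $\sum_{\ell\ge0}\alpha^\ell(\sqrt{1+x}-1)^\ell/\ell!$. The substitution $y=\sqrt{1+x}$ and the generalized binomial series identity
\[
\Bigl(\tfrac{1+y}{2}\Bigr)^m=\sum_{j\ge0}\frac{m}{m-j}\binom{m-j}{j}\frac{x^j}{4^j}
\]
then give $\xi_j(\alpha)$ in closed form, from which the stated $A_k(t)$ drops out after reindexing. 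This sidesteps the bookkeeping you anticipate; your brute-force multiplication would work in principle but does not obviously yield the closed form. (Incidentally, your proposed simplification ``$(t+1-\ell)/(t+1-\ell)! = 1/(t-\ell)!$'' does not match the actual formula, which has $(t+1-2\ell)!$ in the denominator.)

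Your description of the error packaging is also slightly off. In the paper, $\widehat{E}_2(N)$ is \emph{not} built from the Engel remainder plus the $e^{-\pi\sqrt{n+k}}$ piece; both of those are first absorbed into a single $O_{\le1}(\overline{\mu}(n+k)^{-(N+1)})$ term via the inequality $10x^me^{-x/2}<1$ for $x\ge N(m)$ (this is where the threshold $N_0(N+1)$ in $n(N,k)$ enters), and $\widehat{E}_2(N)$ then comes from rewriting $\frac{e^{\pi\sqrt{n+k}}}{8(n+k)}\overline{\mu}(n+k)^{-(N+1)}$ relative to $\frac{e^{\pi\sqrt{n}}}{8n}$. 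And $\overline{E}_1(N,k)$ is not obtained by Taylor remainders on the three individual series; it comes from explicit upper bounds on $|A_k(2t)|$ and $|A_k(2t+1)|$ (using $\binom{2t}{t}\le 4^t/\sqrt{\pi t}$), which is what produces the separate $\sinh(\pi\sqrt{|k|})$ and $\cosh(\pi\sqrt{|k|})$ factors. Your Taylor-remainder route would give \emph{some} effective bound, but not the specific constants $\widetilde{E}_1,\widetilde{E}_2$ announced in the statement.
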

	\begin{theorem}\label{thm1.2}
	Let $n(N,r,j)$ and $E(N,r,j)$ be as defined in (\ref{n(N,r,j)}) and (\ref{E(N,r,j)}) respectively. For all $j,r \in \mathbb{N}$, $N \in \mathbb{Z}_{\ge r}$ and $n \ge n(N,r,j)$ we have
		\begin{equation}\label{eqnthm1.2.}
		\Delta^r_{j}\left(\overline{p}\right)(n-j)=\frac{e^{\pi\sqrt{n}}}{8n}\Biggl(\sum_{t=r}^{N}\frac{A_j(t,r)}{n^{\frac{t}{2}}}+O_{\le E(N,r,j)}(n^{-\frac{N+1}{2}}) \Biggr),
		\end{equation}
		where \begin{equation}\label{A(t,r)}
		A_j(t,r)=\Bigl(-\frac{1}{2}\Bigr)^t \sum_{\ell=0}^{\lfloor \frac{t+1}{2} \rfloor} \binom{t+1}{\ell}\frac{j^{t-\ell}\pi^{t-2\ell}(t+1-\ell)}{(t+1-2\ell)!} \sum_{m=0}^{r}(-1)^m \binom{r}{m}(m+1)^{t-\ell}.
		\end{equation}
\end{theorem}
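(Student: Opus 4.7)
The plan is to reduce Theorem \ref{thm1.2} to Theorem \ref{thm1.1} via the standard binomial expansion of the iterated shifted difference operator, and then collect and simplify the resulting sums of coefficients.

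First, I would expand the left-hand side using the identity
\begin{equation*}
\Delta^r_j\left(\overline{p}\right)(n-j)=\sum_{m=0}^{r}(-1)^m \binom{r}{m}\overline{p}\bigl(n-(m+1)j\bigr),
\end{equation*}
which follows from a routine induction on $r$ from the definition $\Delta_j(a)(n)=a(n)-a(n-j)$. Since $n\ge n(N,r,j)=\max_{0\le m\le r}n\bigl(N,-(m+1)j\bigr)$, for each $m\in\{0,1,\ldots,r\}$ the hypothesis of Theorem \ref{thm1.1} is satisfied with $k=-(m+1)j$. Applying that theorem term by term gives
\begin{equation*}
\overline{p}\bigl(n-(m+1)j\bigr)=\frac{e^{\pi\sqrt{n}}}{8n}\Biggl(\sum_{t=0}^{N}\frac{A_{-(m+1)j}(t)}{n^{t/2}}+O_{\le E(N,-(m+1)j)}\bigl(n^{-(N+1)/2}\bigr)\Biggr).
\end{equation*}

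Next I would interchange the finite sums over $m$ and $t$ and read off the coefficient of $n^{-t/2}$. Substituting $k=-(m+1)j$ into \eqref{A_{k}(t)} and using that $(-1)^\ell/(-1)^\ell=1$ cleanly cancels the sign in the denominator, I obtain
\begin{equation*}
A_{-(m+1)j}(t)=\Bigl(-\tfrac{1}{2}\Bigr)^t\sum_{\ell=0}^{\lfloor(t+1)/2\rfloor}\binom{t+1}{\ell}\frac{(t+1-\ell)\pi^{t-2\ell}}{(t+1-2\ell)!}\,j^{t-\ell}(m+1)^{t-\ell}.
\end{equation*}
Multiplying by $(-1)^m\binom{r}{m}$, summing in $m$, and exchanging the order of summation, the inner sum becomes exactly the expression $A_j(t,r)$ in \eqref{A(t,r)}. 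The error contribution is controlled by $\sum_{m=0}^{r}\binom{r}{m}E\bigl(N,-(m+1)j\bigr)=E(N,r,j)$ by the triangle inequality applied to the uniform big-$O$ bounds.

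The final step — and the only point requiring genuine care — is to explain why the expansion may be written starting from $t=r$ rather than $t=0$. The key is the classical finite-difference identity $\sum_{m=0}^{r}(-1)^m\binom{r}{m}P(m)=0$ for any polynomial $P$ of degree $<r$. For each fixed $\ell$, the quantity $(m+1)^{t-\ell}$ is a polynomial in $m$ of degree $t-\ell$, and in the range $0\le\ell\le\lfloor(t+1)/2\rfloor$ with $t<r$ we always have $t-\ell\le t<r$, so every inner sum $\sum_m(-1)^m\binom{r}{m}(m+1)^{t-\ell}$ vanishes. Hence $A_j(t,r)=0$ for $t<r$, and the sum collapses to the range $r\le t\le N$ as stated; this also explains the hypothesis $N\ge r$, which ensures at least one nonvanishing main term.

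The main obstacle is purely bookkeeping: tracking the signs arising from the substitution $k=-(m+1)j$ and verifying that the resulting algebraic identity reproduces \eqref{A(t,r)} exactly. Once that is done, the vanishing argument above and the aggregated error bound complete the proof.
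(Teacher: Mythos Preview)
Your proposal is correct and follows essentially the same route as the paper: expand $\Delta^r_j(\overline{p})(n-j)$ binomially, apply Theorem~\ref{thm1.1} with $k=-(m+1)j$ for each $m$, aggregate the error via the triangle inequality (the paper does this by an explicit even/odd split yielding matching upper and lower bounds), and then show $A_j(t,r)=0$ for $t<r$. The only cosmetic difference is that the paper phrases the vanishing step through the Stirling-number identity $\sum_{m=0}^{r}(-1)^{m}\binom{r}{m}m^{s}=(-1)^{r}r!\,{s\brace r}$, which is zero for $s<r$, whereas you invoke the equivalent fact that the $r$-th finite difference annihilates polynomials of degree $<r$.
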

\begin{remark}
In particular, we see that 
	\begin{equation}\label{neweqn2}
A_j(r,r)=\left(\frac{\pi j}{2}\right)^r.
\end{equation}
\end{remark}
From Theorem \ref{thm1.2}, we have the following asymptotic growth of $\Delta^r_{j}(\overline{p})(n)$.
\begin{corollary}\label{cor1}
	We have
		\begin{equation*}
		\Delta^r_{j}\left(\overline{p}\right)(n)\sim \left(\frac{\pi j}{2}\right)^r\frac{e^{\pi\sqrt{n}}}{8\ n^{\frac{r}{2}+1}}\ \ \text{as}\ \ n\to\infty.
		\end{equation*}
	\end{corollary}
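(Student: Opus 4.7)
The plan is to apply Theorem \ref{thm1.2} with the minimal choice $N=r$, extract the leading term via the explicit identity in the remark, and then translate the argument by $j$ to go from $\Delta^r_j(\overline{p})(n-j)$ to $\Delta^r_j(\overline{p})(n)$. Specializing (\ref{eqnthm1.2.}) to $N=r$ gives, for all $n\ge n(r,r,j)$,
\[
\Delta^r_{j}(\overline{p})(n-j) = \frac{e^{\pi\sqrt{n}}}{8n}\left(\frac{A_j(r,r)}{n^{r/2}} + O_{\le E(r,r,j)}\bigl(n^{-(r+1)/2}\bigr)\right).
\]
By (\ref{neweqn2}), $A_j(r,r) = (\pi j/2)^r$, so the dominant contribution is $(\pi j/2)^r e^{\pi\sqrt{n}}/(8\,n^{r/2+1})$ and the remainder is smaller by a factor of $n^{-1/2}$. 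This already yields $\Delta^r_j(\overline{p})(n-j) \sim (\pi j/2)^r\, e^{\pi\sqrt{n}}/(8\,n^{r/2+1})$ as $n \to \infty$.

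The only remaining step is the substitution $n\mapsto n+j$, which I would justify by checking that both $e^{\pi\sqrt{n+j}}/e^{\pi\sqrt{n}}$ and $n^{r/2+1}/(n+j)^{r/2+1}$ tend to $1$. The first follows from the elementary identity $\sqrt{n+j}-\sqrt{n} = j/(\sqrt{n+j}+\sqrt{n}) = O(n^{-1/2})$, so $e^{\pi(\sqrt{n+j}-\sqrt{n})} = 1+O(n^{-1/2})$; the second is immediate from $(1+j/n)^{r/2+1} \to 1$. Combining these, the asymptotic for $\Delta^r_j(\overline{p})(n-j)$ transfers directly to the stated asymptotic for $\Delta^r_j(\overline{p})(n)$.

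There is no substantive obstacle here: the corollary is a direct extraction of the leading term from Theorem \ref{thm1.2}, aided by the closed-form evaluation $A_j(r,r)=(\pi j/2)^r$ from the remark, together with the routine observation that shifting the argument by the fixed integer $j$ does not alter the leading-order asymptotic. All of the real work lies upstream, in Theorems \ref{thm1.1} and \ref{thm1.2} themselves.
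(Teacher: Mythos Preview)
Your proposal is correct and follows exactly the approach the paper intends: the paper states Corollary~\ref{cor1} directly after Theorem~\ref{thm1.2} with the remark~\eqref{neweqn2}, giving no separate proof, so the corollary is meant to be read as the immediate consequence you describe. Your handling of the shift $n\mapsto n+j$ is the standard justification the paper leaves implicit.
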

Setting $j=1$ in Corollary \ref{cor1}, we get the asymptotic major term of $\Delta^r(\overline{p})(n)$ stated below.
\begin{corollary}\label{cor2}
We have
	\begin{equation*}
	\Delta^r\left(\overline{p}\right)(n)\sim \left(\frac{\pi}{2}\right)^r\frac{e^{\pi\sqrt{n}}}{8\ n^{\frac{r}{2}+1}}\ \ \text{as}\ \ n\to\infty.
	\end{equation*}
\end{corollary}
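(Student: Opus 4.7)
The plan is to deduce Corollary~\ref{cor2} by specializing Corollary~\ref{cor1} at $j=1$, so the real task is to establish Corollary~\ref{cor1}, which in turn follows by extracting the leading term of the asymptotic expansion already furnished by Theorem~\ref{thm1.2}.

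First I would take $N=r$ (the minimum value allowed, since Theorem~\ref{thm1.2} requires $N\ge r$) in~(\ref{eqnthm1.2.}). The sum $\sum_{t=r}^{N}$ then collapses to the single term $A_j(r,r)/n^{r/2}$, and the remaining contribution is absorbed into the error term of order $n^{-(r+1)/2}$. Invoking the identity $A_j(r,r)=(\pi j/2)^r$ from equation~(\ref{neweqn2}), this yields
\begin{equation*}
\Delta^r_j(\overline{p})(n-j)=\frac{e^{\pi\sqrt{n}}}{8n}\left(\frac{(\pi j/2)^r}{n^{r/2}}+O\bigl(n^{-(r+1)/2}\bigr)\right),
\end{equation*}
whose leading asymptotic is clearly $(\pi j/2)^r e^{\pi\sqrt{n}}/(8\,n^{r/2+1})$.

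To obtain Corollary~\ref{cor1} in its stated form (with argument $n$ rather than $n-j$), I would change variables by writing $m=n-j$ and then relabeling $m\mapsto n$. This requires only two elementary equivalences: $e^{\pi\sqrt{n+j}}/e^{\pi\sqrt{n}}\to 1$, which follows from $\sqrt{n+j}-\sqrt{n}=j/(\sqrt{n+j}+\sqrt{n})=O(n^{-1/2})$, and $(n+j)^{r/2+1}/n^{r/2+1}\to 1$. Together these show that the bounded shift of the argument does not affect the leading-order asymptotic. Specializing the resulting equivalence at $j=1$ then delivers Corollary~\ref{cor2}.

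The only point requiring genuine verification is the identity $A_j(r,r)=(\pi j/2)^r$ asserted in the remark. From~(\ref{A(t,r)}) at $t=r$, the inner sum $\sum_{m=0}^{r} (-1)^m \binom{r}{m}(m+1)^{r-\ell}$ is, up to the sign $(-1)^r$, the $r$-th forward difference of the polynomial $(x+1)^{r-\ell}$ evaluated at $0$; since this polynomial has degree $r-\ell$, the difference vanishes for $\ell\ge 1$ and equals $(-1)^r\, r!$ for $\ell=0$. Substituting back into~(\ref{A(t,r)}) and simplifying gives $A_j(r,r) = (\pi j/2)^r$. No step presents a substantive obstacle; indeed Corollary~\ref{cor2} is essentially a reading off of the leading term of the expansion in Theorem~\ref{thm1.2}.
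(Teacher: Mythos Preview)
Your proposal is correct and follows essentially the same route as the paper: Corollary~\ref{cor2} is obtained by setting $j=1$ in Corollary~\ref{cor1}, which in turn is read off from the leading term of Theorem~\ref{thm1.2} together with~\eqref{neweqn2}. Your verification of $A_j(r,r)=(\pi j/2)^r$ via the $r$-th forward difference of $(x+1)^{r-\ell}$ is equivalent to the paper's argument using Stirling numbers of the second kind (cf.~\eqref{fact} and~\eqref{ident}), and your handling of the shift $n\mapsto n-j$ is a detail the paper leaves implicit.
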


The paper is organized in the following way. In Section \ref{sec0}, we present some preparatory lemmas in deriving asymptotic expansion of the shifted overpartition function which will be helpful to get error estimations carried out in Section \ref{proof} that finally lead to the proof of Theorem \ref{thm1.1} and applying Theorem \ref{thm1.1}, subsequently, we will present the proof of Theorem \ref{thm1.2}. Finally, we conclude the paper with raising some open questions in Section \ref{conclusion}.

	\section{Preliminary lemmas}\label{sec0} 
	Following the sketch done in \cite[Lemma 2.1.]{MZZ}, we have the folowing lemma.
	\begin{lemma}\label{lemma1}
	Let $N(m)$ be as defined in \eqref{defn1}.	Then for any integer $m\ge1$,  
	\begin{equation*}
		10x^m e^{-\frac{x}{2}}<1, \ \ \text{for} \ x\ge N(m).  
	\end{equation*}
\begin{proof}
	For $m=1$, it is clear that $N(m)=9$. For $m \ge 2$, define $f(x):=\frac{x}{2}-m\log x -\log 10$. Now, $f^{'}(x)=\frac{1}{2}-\frac{m}{x}>0$ implies that $f(x)$ is increasing for $x >2m$. In order to show $f(x)>0$ for all $x \ge N(m)$, it is enough to show $f(x)>f(N(m))>0$ for all $x \ge N(m)>2m$; i.e., we first show that (i) $N(m)>2m$ and then we show (ii) $f(N(m))>0$. It is easy to verify that  $N(m)>2m$, that is, $10m \log m-m \log \log m >2m$ for $m\ge 2$. Next, in order to prove $f(N(m))>0$, it is equivalent to show that $$5m\log m-\frac{m\log \log m}{2}-m\log(10m \log m-m \log \log m)-\log 10>0,$$
	which in turn, amounts to prove
	$$\log m> \frac{2m+1}{8m}\log 10+\frac{3}{8}\log \log m+\frac{1}{8}\log\Bigl(1-\frac{\log \log m}{10 \log m}\Bigr).$$
	Since $\frac{2m+1}{8m}>\frac 14$ and $\log\left(1-\frac{\log \log m}{10 \log m}\right)<0$ for $m\ge 3$, we have $m>10^{\frac{1}{4}}\log m^{\frac{3}{8}}$ which is equivalent to the inequality stated before. 
\end{proof}
\end{lemma}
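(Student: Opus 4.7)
My plan is to reduce the multiplicative inequality to an additive one by taking logarithms: the claim $10x^m e^{-x/2}<1$ is equivalent to showing that
\[
f(x):=\frac{x}{2}-m\log x-\log 10>0\quad\text{for all}\ x\ge N(m).
\]
The $m=1$ case I would dispatch by a direct numerical check at $x=9$ (verifying $e^{9/2}>90$ via e.g.\ $e^{9/2}=e^{4}\cdot e^{1/2}>54\cdot 1.6$) together with monotonicity of $g(x):=10xe^{-x/2}$ on $[2,\infty)$, whose derivative $10e^{-x/2}(1-x/2)$ is negative there.

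For $m\ge 2$ I would follow a standard two-step scheme: (i) show $f$ is increasing on a half-line containing $N(m)$, and (ii) check $f(N(m))>0$. Step (i) is immediate from $f'(x)=\tfrac12-m/x>0$ for $x>2m$. To see $N(m)>2m$ I would reduce to $10\log m-\log\log m>2$, which is obvious since $10\log 2>6.9$ and $\log\log m$ is small (negative at $m=2$, slowly growing afterwards).

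The substantive step is $f(N(m))>0$, and this is where I expect the real work. Substituting $N(m)=10m\log m-m\log\log m$ and factoring $10m\log m$ out of the logarithm gives
\[
\log N(m)=\log m+\log 10+\log\log m+\log\!\Bigl(1-\frac{\log\log m}{10\log m}\Bigr),
\]
and after multiplying by $m$ and subtracting from $N(m)/2=5m\log m-\tfrac{m}{2}\log\log m$, the dominant contribution collapses to $4m\log m$, while the remaining terms are of order $m\log\log m$, $m$, and a bounded correction. Dividing the resulting inequality by $m$ produces a clean form
\[
\log m>\frac{2m+1}{8m}\log 10+\frac{3}{8}\log\log m+\frac{1}{8}\log\!\Bigl(1-\frac{\log\log m}{10\log m}\Bigr),
\]
so the hard part is verifying this for every relevant $m$. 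I would bound the right-hand side using $(2m+1)/(8m)<1/4$ and the fact that the correction term is $\le 0$ once $\log\log m>0$, reducing matters to a comparison of the form $m>10^{1/4}(\log m)^{3/8}$ which holds by monotonicity beyond a small threshold, the handful of remaining small-$m$ instances being settled by direct numerical evaluation.
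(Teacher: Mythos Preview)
Your proposal follows the paper's proof essentially verbatim: reduce to $f(x)=\tfrac{x}{2}-m\log x-\log 10>0$, treat $m=1$ by a direct check plus monotonicity, use $f'(x)>0$ for $x>2m$ together with $N(m)>2m$, and then verify $f(N(m))>0$ by the same substitution and factorisation yielding the displayed inequality in $\log m$. One slip to fix: $(2m+1)/(8m)=\tfrac14+\tfrac{1}{8m}>\tfrac14$, not $<\tfrac14$, so to bound the right-hand side from above you should instead use something like $(2m+1)/(8m)\le 5/16$ for $m\ge 2$; with that correction the reduction to $m>C(\log m)^{3/8}$ proceeds exactly as you outline.
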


Set $\overline{\mu}(n):=\pi\sqrt{n}$.

\begin{lemma}\label{lemma2}
Let $N_0(m)$ be as defined in \eqref{N_0(m)}. Then for all $k \in \mathbb{Z} \setminus\{0\}$, $m\ge 2$ and $n \ge N_0(m)$, 
	\begin{equation}\label{eqn_nplusk}
	\overline{p}(n+k)=\frac{e^{\overline{\mu}(n+k)}}{8(n+k)}\Biggl(1-\frac{1}{\overline{\mu}(n+k)}+O_{\le 1}(\overline{\mu}(n+k)^{-m})\Biggr).
	\end{equation}
\end{lemma}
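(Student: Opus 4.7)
The plan is to start from the exact Zuckerman expansion (\ref{Zuckerman}) evaluated at $n+k$, isolating the $k'=1$ Farey term (I use $k'$ for the Farey summation index to avoid clashing with the shift $k$) as the principal contribution and showing every other piece is exponentially smaller than the main term.

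The leading Farey term, since $\omega(0,1)=1$, is
\[
T_1(n+k) := \frac{1}{2\pi}\frac{d}{dn}\!\left(\frac{\sinh(\pi\sqrt{n+k})}{\sqrt{n+k}}\right) = \frac{\cosh(\overline{\mu}(n+k))}{4(n+k)} - \frac{\sinh(\overline{\mu}(n+k))}{4(n+k)\,\overline{\mu}(n+k)}.
\]
Splitting $\cosh$ and $\sinh$ into exponentials yields
\[
T_1(n+k) = \frac{e^{\overline{\mu}(n+k)}}{8(n+k)}\!\left(1-\frac{1}{\overline{\mu}(n+k)}\right) + \frac{e^{-\overline{\mu}(n+k)}}{8(n+k)}\!\left(1+\frac{1}{\overline{\mu}(n+k)}\right),
\]
which already exhibits the target main piece plus an exponentially small residual.

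To handle $\overline{p}(n+k)-T_1(n+k)$, I will invoke Engel's truncation (\ref{Engel1})--(\ref{Engel2}) at a level $N$ chosen sufficiently large (e.g.\ a constant multiple of $\sqrt{n+k}$) so that the estimate (\ref{Engel2}) on $R_2(n+k,N)$ is merely polynomial in $n+k$. The intermediate Farey terms with odd $k'=3,5,\ldots,N$ can each be bounded by the trivial Kloosterman estimate $|\sum_h \omega(h,k')^2\omega(2h,k')^{-1}e^{-2\pi i n h/k'}|\le k'$ together with the direct bound on $\frac{d}{dn}\!\left(\sinh(\pi\sqrt{n+k}/k')/\sqrt{n+k}\right)$; the resulting total is dominated by the $k'=3$ term and is of order $e^{\overline{\mu}(n+k)/3}/(n+k)$. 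Since both $e^{\overline{\mu}(n+k)/3}$ and any polynomial in $n+k$ are exponentially smaller than $e^{\overline{\mu}(n+k)}$, one obtains $|\overline{p}(n+k)-T_1(n+k)| = O\!\left(e^{\overline{\mu}(n+k)}\cdot e^{-2\overline{\mu}(n+k)/3}\right)$ in an effective sense.

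Finally, after subtracting the target main piece and factoring out $\tfrac{e^{\overline{\mu}(n+k)}}{8(n+k)}$, the whole residual error consists of finitely many terms of shape (polynomial in $\overline{\mu}(n+k)$)$\cdot e^{-c\,\overline{\mu}(n+k)}$ with $c$ bounded below (by $2/3$ in the worst case). The hypothesis $n\ge N_0(m)$ guarantees $\overline{\mu}(n+k)\ge N(m)$ (automatic for $k\ge 0$; the $k<0$ case uses the implicit understanding that the stronger hypotheses in (\ref{n(N,k)}) of Theorem \ref{thm1.1} force $n+k$ to be comparable to $n$), so Lemma \ref{lemma1} supplies $10\,\overline{\mu}(n+k)^m e^{-\overline{\mu}(n+k)/2}<1$, which together with the elementary inequality $e^{-2x/3}\le e^{-x/2}$ for $x>0$ absorbs every residual into an $O_{\le 1}(\overline{\mu}(n+k)^{-m})$ term. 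The main technical hurdle will be the quantitative bookkeeping: one must choose $N$ in Engel's truncation carefully so that the mid-range Farey sum and $R_2(n+k,N)$ are simultaneously controlled, and then chase the constants through to confirm that the implicit constant in the final error is indeed at most $1$.
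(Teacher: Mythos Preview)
Your overall strategy---isolate the $k'=1$ Farey term and show everything else is exponentially smaller---is the right one and is in spirit the same as the paper's, but you have made the bookkeeping much harder by letting the Engel truncation level $N$ grow with $n+k$. The paper (quoting Liu--Zhang) simply takes $N=2$ in \eqref{Engel1}: since the Farey sum runs over \emph{odd} $k'\le N$, this retains only $k'=1$ and pushes \emph{all} remaining terms into $R_2(n,2)$, for which \eqref{Engel2} immediately gives $|R_2(n,2)|<\frac{2^{5/2}}{\pi n^{3/2}}\sinh(\pi\sqrt{n}/2)$. After dividing by $\frac{e^{\overline{\mu}(n)}}{8n}$ and adding the $e^{-2\overline{\mu}(n)}$ piece coming from $T_1$, one obtains the single clean inequality $|\tilde T(n)|<10\,e^{-\overline{\mu}(n)/2}$, and Lemma~\ref{lemma1} is tailored precisely to convert that into $O_{\le 1}(\overline{\mu}(n)^{-m})$. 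By contrast, your route leaves a residual of the shape $(\text{polynomial in }\overline{\mu})\cdot e^{-2\overline{\mu}/3}$ after rescaling; invoking Lemma~\ref{lemma1} together with $e^{-2x/3}\le e^{-x/2}$ does \emph{not} by itself absorb the extra polynomial prefactor, so you would still owe a separate verification that $C\cdot(\text{poly}(x))\,e^{-x/6}\le 10$ for all $x\ge N(m)$, with the specific constant $C$ your intermediate-Farey bound produces. That is doable but is exactly the ``quantitative bookkeeping'' you flagged as the main hurdle; the $N=2$ choice sidesteps it entirely.

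One further remark: your treatment of the case $k<0$ by pointing forward to the hypotheses of Theorem~\ref{thm1.1} is circular, since that theorem is proved \emph{using} the present lemma. That said, the paper's own proof exhibits the same looseness---it in fact concludes the estimate for $n\ge N_1(m,k)$ rather than the stated $n\ge N_0(m)$---so this is a wrinkle in the statement rather than a defect peculiar to your argument.
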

\begin{proof}
		From \cite[equation (3.5)]{LiuZhang}, we have 
	$$\overline{p}(n)=\frac{e^{\overline{\mu}(n)}}{8n} \Biggl(1-\frac{1}{\overline{\mu}(n)} +\tilde{T}(n)\Biggr)$$ with $$\tilde{T}(n)=\Biggl(1+\frac{1}{\overline{\mu}(n)}\Biggr)e^{-2\overline{\mu}(n)}+\frac{8n}{e^{\overline{\mu}(n)}}R_2(n,2),$$ and $R_2(n,2)$ is the error term of (\eqref{Engel1}), given in (\eqref{Engel2}). 
	By \cite[equation (3.6)]{LiuZhang} we have $$ \mid \tilde{T}(n) \mid<10 e^{-\frac{1}{2}\overline{\mu}(n)}.$$ Using Lemma \ref{lemma1} we get for all $m \ge 2$ and $\overline{\mu}(n) \ge N(m)$; i.e, for $n \ge N_0(m)$, 
	\begin{equation}\label{neweqn1}
	\overline{p}(n)=\frac{e^{\overline{\mu}(n)}}{8n}\Biggl(1-\frac{1}{\overline{\mu}(n)}+O_{\le 1}(\overline{\mu}(n)^{-m})\Biggr).
	\end{equation}
	Thus applying \eqref{neweqn1} with $n\mapsto n+k$ and following \eqref{defn2}, we have for all $k \in \mathbb{Z} \setminus\{0\}$, $m\ge 2$ and $n \ge N_1(m,k)$,
	\begin{equation*}
	\overline{p}(n+k)=\frac{e^{\overline{\mu}(n+k)}}{8(n+k)}\Biggl(1-\frac{1}{\overline{\mu}(n+k)}+O_{\le 1}(\overline{\mu}(n+k)^{-m})\Biggr).
	\end{equation*}
	This finishes the proof of the lemma.
\end{proof}

To derive the coefficient sequence arising from the Taylor expansion of the asymptotic main term $\frac{e^{\overline{\mu}(n+k)}}{8(n+k)}\Bigl(1-\frac{1}{\overline{\mu}(n+k)}\Bigr)$ after extracting out the factor $\frac{e^{\pi \sqrt{n}}}{8n}$, we closely follow the methodology carried out in \cite[Proposition 4.4]{Sullivan}.

\begin{lemma}\label{seclem1}
	For $k\in \mathbb{Z}\setminus\{0\}$, we have
	$$\frac{e^{\overline{\mu}(n+k)}}{8(n+k)}\Biggl(1-\frac{1}{\overline{\mu}(n+k)}\Biggr)=\frac{e^{\pi \sqrt{n}}}{8n}\sum_{t\ge 0}\frac{A_k(t)}{n^{\frac t2}},$$
	with
	$$A_k(t):=\left(\frac{\pi k}{2}\right)^t\sum_{\ell=0}^{\lfloor \frac{t+1}{2} \rfloor}\left(-\frac{1}{\pi^2k}\right)^{\ell} \frac{(t-\ell+1)}{(t+1-2\ell)!}\binom{t+1}{\ell}.$$
\end{lemma}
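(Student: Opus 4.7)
The plan is to establish the identity as a formal power series identity in $z := n^{-1/2}$. After dividing both sides by $\frac{e^{\pi\sqrt{n}}}{8n}$ and using $\sqrt{n+k} = z^{-1}(1+kz^2)^{1/2}$, the claim becomes
\begin{equation*}
\frac{1}{1+kz^2}\left(1 - \frac{z}{\pi(1+kz^2)^{1/2}}\right)\exp\!\left(\pi z^{-1}\bigl[(1+kz^2)^{1/2}-1\bigr]\right) = \sum_{t\ge 0} A_k(t)\, z^t.
\end{equation*}
The binomial series gives $\pi z^{-1}\bigl[(1+kz^2)^{1/2}-1\bigr] = \pi\sum_{j\ge 1}\binom{1/2}{j}k^j z^{2j-1}$, which has vanishing constant term, so the exponential on the left is a well-defined formal power series in $z$.

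Next I would expand each factor using binomial series: $(1+kz^2)^{-1} = \sum_{i\ge 0}(-k)^i z^{2i}$, $(1+kz^2)^{-1/2} = \sum_{q\ge 0}\binom{-1/2}{q}k^q z^{2q}$, and $\exp\!\bigl(\pi\sum_j \binom{1/2}{j} k^j z^{2j-1}\bigr) = \sum_{m\ge 0}\frac{\pi^m}{m!}\bigl(\sum_j \binom{1/2}{j}k^j z^{2j-1}\bigr)^m$. Multiplying these three factors and extracting the coefficient of $z^t$ yields a finite multiple sum indexed by tuples $(i,q,m,j_1,\ldots,j_m)$ of non-negative integers whose total weight equals $t$.

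Finally, the core step is to collapse this multiple sum into the single sum defining $A_k(t)$. Using the simplification $(t-\ell+1)\binom{t+1}{\ell} = \frac{(t+1)!}{\ell!(t-\ell)!}$, the target can be rewritten as $A_k(t) = \frac{1}{2^t}\sum_{\ell=0}^{\lfloor (t+1)/2\rfloor}(-1)^\ell \frac{\pi^{t-2\ell}k^{t-\ell}(t+1)!}{\ell!(t-\ell)!(t+1-2\ell)!}$. I would organize the expansion by grouping contributions according to the power of $\pi$ (which fixes $\ell$ via $\ell = \tfrac{t - (\text{power of }\pi)}{2}$) and then verify that the residual sum over the remaining indices $(i,q,m,j_1,\ldots,j_m)$ evaluates via a Chu--Vandermonde-type manipulation to $\frac{(t+1)!}{\ell!(t-\ell)!(t+1-2\ell)!}$. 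This combinatorial collapse is the main obstacle, and the computation follows the template of \cite[Proposition 4.4]{Sullivan}. As a sanity check I would verify the first few cases by direct expansion: $A_k(0) = 1$, $A_k(1) = \tfrac{\pi k}{2} - \tfrac{1}{\pi}$, $A_k(2) = \tfrac{\pi^2 k^2}{8} - \tfrac{3k}{2}$.
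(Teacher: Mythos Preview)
Your setup is correct and matches the paper's: both reduce to showing, with $z=n^{-1/2}$, that
\[
e^{\pi z^{-1}(\sqrt{1+kz^2}-1)}\Bigl(\tfrac{1}{1+kz^2}-\tfrac{z}{\pi(1+kz^2)^{3/2}}\Bigr)=\sum_{t\ge 0}A_k(t)z^t.
\]
But your plan then diverges from the paper at the decisive point. You propose to expand each of the three factors by binomial and exponential series and collapse the resulting multi-index sum ``via a Chu--Vandermonde-type manipulation.'' You explicitly flag this collapse as ``the main obstacle'' and do not carry it out; the sum over $(i,q,m,j_1,\dots,j_m)$ involves compositions coming from powers of an infinite series, and it is not at all clear that a single Vandermonde identity handles it.

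The paper avoids this obstacle entirely with the following observation (which is also the content of the Sullivan reference you cite): the bracketed factor is exactly $\frac{4}{\alpha^2}$ times the second $x$-derivative of $e^{\alpha(\sqrt{1+x}-1)}$, where $\alpha=\pi/z$ and $x=kz^2$. Hence integrating twice in $x$ gives
\[
\xi_j(\alpha)=(j+1)(j+2)\,[x^{j+2}]\,\tfrac{4}{\alpha^2}\sum_{\ell\ge 0}\tfrac{\alpha^\ell}{\ell!}(\sqrt{1+x}-1)^\ell,
\]
and now one only needs $[x^m]\bigl(\tfrac{1+\sqrt{1+x}}{2}\bigr)^{-s}$, which is supplied in closed form by the generalized binomial series $\mathcal{B}_{-1}$. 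This single trick replaces your unspecified multi-sum collapse with a one-line coefficient extraction; without it, your proposal is an outline rather than a proof. If you want to keep your direct-expansion route, you would need to state and prove the precise hypergeometric identity that performs the collapse, and that is essentially as much work as the lemma itself.
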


\begin{proof}
	We first estimate $\frac{e^{\overline{\mu}(n+k)}}{8(n+k)}\Bigl(1-\frac{1}{\overline{\mu}(n+k)}\Bigr)$ up to some truncation point say $N \in \mathbb{N}$. \\
	Setting $z:=\frac{1}{\sqrt{n}}$ and $c:=\pi$, we have 
	\begingroup
	\allowdisplaybreaks
	\begin{align}\label{eqn1}
	\frac{e^{\overline{\mu}(n+k)}}{8(n+k)}\Biggl(1-\frac{1}{\overline{\mu}(n+k)}\Biggr)&= \frac{e^{\pi \sqrt{n}}}{8n} \Biggl(\frac{e^{\pi \sqrt{n}\Bigl(\sqrt{1+\frac{k}{n}}-1\Bigr)}}{(1+\frac{k}{n})}\Biggl(1-\frac{1}{\pi \sqrt{n}\sqrt{1+\frac{k}{n}}}\Biggr)\Biggr)\\\nonumber
	&=\frac{e^{\pi \sqrt{n}}}{8n} \Biggl[e^{\pi \sqrt{n}\Bigl(\sqrt{1+\frac{k}{n}}-1\Bigr)} \Biggl(\frac{1}{1+\frac{k}{n}}-\frac{1}{\pi \sqrt{n}(1+\frac{k}{n})^{3/2}}\Biggr)\Biggr]\nonumber\\
	&= \frac{e^{\pi \sqrt{n}}}{8n}\Biggl[\exp\Bigl(\frac{c}{z}(\sqrt{1+kz^2}-1)\Bigr)\Biggl(\frac{1}{1+kz^2}-\frac{1}{\frac{c}{z}(1+kz^2)^{3/2}}\Biggr)\Biggr].\nonumber 
	\end{align}
	\endgroup
	Let $\alpha:=c\sqrt{n}$, $x:=kz^2=\frac{k}{n}$. Considering the left hand side of \eqref{eqn1} as a formal power series in $x$, we get
	\begin{equation}\label{eqn2}
	\exp(\alpha(\sqrt{x+1}-1))\Biggl(\frac{1}{1+x}-\frac{1}{\alpha(1+x)^{3/2}}\Biggr)=\sum_{j=0}^{\infty}\xi_{j}(\alpha)x^j.
	\end{equation} 
	Integrating \eqref{eqn2} twice with respect to $x$ yields
	\begin{equation*}
	\xi_j(\alpha)=(j+1)(j+2)[x^{j+2}]\frac{4}{\alpha^2}\sum_{\ell=0}^{\infty}\frac{\alpha^\ell}{\ell !}(\sqrt{1+x}-1)^{\ell}.
	\end{equation*}
	Let $y:=\sqrt{1+x}$ and $\mathcal{B}_{-1}(x)$ be the generalized binomial series (see \cite[page 203]{GrahamKnuthPatashnik}). Then for all $m\in \mathbb{C}$ with $m \notin \mathbb{Z}_{\ge 0}$, it follows that 
	\begin{equation}\label{eqn3}
	\Bigl(\frac{1+y}{2}\Bigr)^m=\Bigl(\frac{1+\sqrt{1+x}}{2}\Bigr)^m= {\mathcal{B}_{-1}} \Bigl(\frac{x}{4}\Bigr)^m=\sum_{j=0}^{\infty}\frac{m}{m-j}\binom{m-j}{j}\frac{x^j}{4^j}.
	\end{equation}
	Therefore, 
	\begingroup
	\allowdisplaybreaks
	\begin{align*}
	\xi_j(\alpha)&=(j+1)(j+2)\sum_{\ell=0}^{\infty}\Bigl(\frac{\alpha}{2}\Bigr)^{\ell-2}\frac{1}{\ell!}[x^{j+2-\ell}]\Bigl(\frac{1+y}{2}\Bigr)^{-\ell}\\ 
	& =(j+1)(j+2)\sum_{\ell=0}^{j+1} \Bigl(\frac{\alpha}{2}\Bigr)^{j-\ell}\frac{1}{(j+2-\ell)!}[x^{\ell}]\Bigl(\frac{1+y}{2}\Bigr)^{-(j+2-\ell)}\\ 
	&= (j+1) \sum_{\ell=0}^{j+1} \Bigl(\frac{\alpha}{2}\Bigr)^{j-\ell}\frac{1}{(j+2-\ell)!}\frac{j+2-\ell}{j+2}\binom{-j-2}{\ell}\frac{1}{4^{\ell}}\\ 
	&= \sum_{\ell=0}^{j+1} \frac{(j+1)\alpha^{j-\ell}(-1)^{\ell}}{2^{j+\ell}(j+\ell+1)!}\binom{j+\ell+1}{\ell},
	\end{align*}
	\endgroup
	where in the second step, we have used \eqref{eqn3} along with the fact that
	$$
	\Bigl(\frac{1+y}{2}\Bigr)^{-(j+2-\ell)}=0 \ \ \text{for}\ \ \ell \ge {j+2}\ \ \text{as}\ \ \binom{m-j-1}{j-1}=0\ \text{for }\ \left\lfloor\frac m2\right\rfloor +1\le j\le m-1.
	$$
	Since, in \eqref{eqn2}, $\xi_{j}(\alpha)x^j$ contains the term $\alpha^{j-\ell}x^{j}={\pi}^{j-\ell}k^{j}n^{-\frac{j+\ell}{2}}$, thus we have
	\begingroup
	\allowdisplaybreaks
	\begin{align*}
	A_{k}(t)&=\underset{\ell \le j+1}{\sum_{j+\ell =t}^{}}\frac{(-1)^{\ell}(j+1){\pi}^{j-\ell}k^j}{2^{j+\ell}(j+1-\ell)!}\binom{j+\ell+1}{\ell} = \sum_{\ell=0}^{\lfloor \frac{t+1}{2} \rfloor} \frac{(-1)^{\ell}(t-\ell+1)\pi^{t-2\ell}k^{t-\ell}}{2^{t}(t+1-2\ell)!}\binom{t+1}{\ell},
	\end{align*}
	\endgroup
	as $j+\ell=t$ and $\ell \le j+1$, we have $t-j \le j+1 \Leftrightarrow j \ge \frac{t-1}{2}$ which implies $\ell \le t-j \le \frac{t+1}{2}$ and this finally concludes the proof. 
\end{proof}
		\section{Proof of Theorem \ref{thm1.1} and Theorem \ref{thm1.2}}\label{proof}

	Before we present the proof of Theorem \ref{thm1.1}, we first need to estimate an error bound of $\sum_{t\ge N+1}\frac{A_k(t)}{n^{\frac t2}}$.
\begin{lemma}\label{seclem2}
Let $n(N,k)$ be as in \eqref{n(N,k)} and $\overline{E}_1(N,k)$ be as in \eqref{error1}. Then for $k\neq 0$, and $n\ge n(N,k)$, we have
$$\sum_{t\ge N+1}^{}\frac{A_{k}(t)}{n^{\frac{t}{2}}}=O_{\le \overline{E}_1(N,k)}\left(n^{-\frac{N+1}{2}}\right).$$
\end{lemma}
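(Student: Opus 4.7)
The plan is to bound the tail $T := \sum_{t\ge N+1} A_k(t)/n^{t/2}$ by estimating each term from the explicit formula \eqref{A_{k}(t)} and then summing.

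First, I would re-index the inner sum in \eqref{A_{k}(t)} by setting $j := t+1-2\ell$, so that $j$ is a non-negative integer of the same parity as $t+1$; this converts $(t+1-2\ell)!=j!$, $(t+1-\ell)=(j+t+1)/2$, and $\binom{t+1}{\ell}=\binom{t+1}{(t+1-j)/2}$, and collects the powers of $\pi$ and $k$ into the single factor $|k|^{(t-1)/2}(\pi\sqrt{|k|})^j/(2^{t+1}\pi)$. Taking absolute values and applying the central binomial estimate $\binom{t+1}{(t+1-j)/2}\le \binom{t+1}{\lfloor(t+1)/2\rfloor}\le 2^{t+1}/\sqrt{\pi(t+1)/2}$ gives
\begin{equation*}
|A_k(t)|\ \le\ \frac{|k|^{(t-1)/2}}{\pi\sqrt{\pi(t+1)/2}}\sum_{\substack{j\ge 0 \\ j\equiv t+1\,(\mathrm{mod}\,2)}}\frac{(j+t+1)(\pi\sqrt{|k|})^j}{2\,j!}.
\end{equation*}

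Second, I would decompose the $j$-sum as $(t+1)\sum_j(\pi\sqrt{|k|})^j/j! + \pi\sqrt{|k|}\sum_j(\pi\sqrt{|k|})^{j-1}/(j-1)!$. Under the parity restriction on $j$, each piece equals either $\sinh(\pi\sqrt{|k|})$ or $\cosh(\pi\sqrt{|k|})$, with the shift of parity in the second sum swapping the two; explicitly, for even $t$ the first piece yields $\sinh$ and the second $\cosh$, while for odd $t$ the roles reverse. After dividing by $n^{t/2}$ this produces the pointwise estimate
\begin{equation*}
\frac{|A_k(t)|}{n^{t/2}}\ \le\ \frac{\sqrt{2(t+1)}\,|k|^{(t-1)/2}\,\mathrm{hyp}_1}{\pi^{3/2}\,n^{t/2}}\ +\ \frac{\sqrt{2}\,|k|^{t/2}\,\mathrm{hyp}_2}{\sqrt{\pi(t+1)}\,n^{t/2}},
\end{equation*}
where $\{\mathrm{hyp}_1,\mathrm{hyp}_2\}=\{\sinh(\pi\sqrt{|k|}),\cosh(\pi\sqrt{|k|})\}$ depending on the parity of $t$.

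Third, I would sum over $t\ge N+1$, keeping even-$t$ and odd-$t$ contributions separate so that the $\sinh$ and $\cosh$ pieces do not mix, and factor out $|k|^{N/2}/n^{(N+1)/2}$ using the identity $|k|^{(t-1)/2}/n^{t/2}=(|k|/n)^{(t-N-1)/2}\cdot|k|^{N/2}/n^{(N+1)/2}$. Each residual sum is then geometric in the ratio $r:=(|k|/n)^{1/2}$. The hypothesis $n\ge n(N,k)$ guarantees $n\ge 4|k|$ (directly when $k<0$, and via the $4k$ entry of \eqref{n(N,k)} when $k>0$), so $r\le 1/2$; hence each geometric tail is bounded by twice its leading term. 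The polynomial factor $\sqrt{2(t+1)}$ is controlled by the splitting $\sqrt{2(t+1)}\le \sqrt{2(N+1)}+\sqrt{2(t-N)}$, which isolates a leading-term contribution of size $\sqrt{2(N+1)}$ (respectively $\sqrt{2(N+2)}$ in the companion sum) from a uniformly convergent remainder $\sum_{s\ge 0}\sqrt{s+1}\,r^s=O(1)$.

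Assembling the four resulting contributions, the $\sinh(\pi\sqrt{|k|})$ pieces combine into $\widetilde{E}_1(N,k)\,n^{-(N+1)/2}$ while the $\cosh(\pi\sqrt{|k|})$ pieces combine into $\widetilde{E}_2(N,k)\,n^{-(N+1)/2}$; the constant $1$ in the factor $(1+\sqrt{2(N+1)})$ of $\widetilde{E}_1$ arises by treating the $t=N+1$ term of the bulk sum separately from the geometric-series tail. The main technical obstacle is precisely this bookkeeping of the constants ($\pi^{-3/2}$, $\sqrt{2}$, the geometric-tail multiplier, and the split of the polynomial-in-$t$ factor) so that the assembled bound matches $\overline{E}_1(N,k)=\widetilde{E}_1(N,k)+\widetilde{E}_2(N,k)$ with the exact coefficients stated.
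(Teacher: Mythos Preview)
Your overall strategy matches the paper's: bound $|A_k(t)|$ from the explicit formula \eqref{A_{k}(t)}, recognize $\sinh(\pi\sqrt{|k|})$ and $\cosh(\pi\sqrt{|k|})$, and sum the resulting geometric-type tail using $n\ge 4|k|$. The key technical step, however, is carried out differently. The paper reindexes $A_k(2t)$ and $A_k(2t+1)$ separately via $\ell\mapsto t-\ell$ and uses the monotonicity
\[
c(t,\ell):=\binom{2t+1}{t-\ell}(t+\ell+1)\le c(t,0)
\]
together with $\binom{2t}{t}\le 4^t/\sqrt{\pi t}$; this yields a \emph{pure} $\sinh$ bound for the even indices and a \emph{pure} $\cosh$ bound for the odd indices. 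Your central-binomial route instead produces mixed $\sinh/\cosh$ bounds at each parity.

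This matters because the lemma asserts the \emph{specific} constant $\overline{E}_1(N,k)=\widetilde{E}_1+\widetilde{E}_2$. In the paper the factor $\bigl(1+\sqrt{2(N+1)}\bigr)$ in $\widetilde{E}_1$ arises from writing $(2s+1)/\sqrt{s}=2\sqrt{s}+1/\sqrt{s}$ and then absorbing the polynomial growth via $\sqrt{u+1}\le 2^u$ into a single geometric sum $\sum_{u\ge0}(2k/n)^u\le 2$; the ``$1$'' comes from the $1/\sqrt{s}$ summand, not from isolating the $t=N+1$ term as you suggest. Your even-$t$ bound $\sqrt{2(t+1)}\,|k|^{(t-1)/2}/\pi^{3/2}$ is in fact slightly sharper than the paper's $(2s+1)/(\pi^{3/2}\sqrt{s})$ on the $\sinh$ side, but you then carry an additional $\cosh$ term there (and symmetrically an additional $\sinh$ term on the odd side). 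With the splitting $\sqrt{2(t+1)}\le\sqrt{2(N+1)}+\sqrt{2(t-N)}$ and a plain geometric-tail bound, the assembled constant will be of the same order but will not collapse to exactly $\widetilde{E}_1+\widetilde{E}_2$ as defined in \eqref{errorevencase1}--\eqref{error1}. So your plan yields a correct $O_{\le C(N,k)}\bigl(n^{-(N+1)/2}\bigr)$ estimate, but the final identification ``$\sinh$-pieces $=\widetilde{E}_1$, $\cosh$-pieces $=\widetilde{E}_2$'' is not justified and, as written, does not go through; to recover the stated constants you would need the paper's pure $\sinh/\cosh$ separation via the monotonicity of $c(t,\ell)$.
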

\begin{proof}
From Lemma \ref{seclem1}, it follows that for $k> 0$,
\begin{align}\label{evencoeff}
A_k(2t)&=
\Bigl(-\frac{k}{4}\Bigr)^t\displaystyle \sum_{\ell=0}^{t} (-1)^{\ell} \binom{2t+1}{t-\ell}(t+\ell+1) \frac{(\pi \sqrt{k})^{2\ell}}{(2\ell+1)!},\\\label{oddcoeff}
A_k(2t+1)&=\frac{k^t(-1)^{t+1}}{2^{2t+1}\pi} \sum_{\ell=0}^{t+1} (-1)^{\ell} \binom{2t+2}{t-\ell+1}(t+\ell+1) \frac{(\pi \sqrt{k})^{2\ell}}{(2\ell)!}.
\end{align}	
For $t \in \mathbb{N}$, define $c(t,\ell):=\binom{2t+1}{t-\ell}(t+\ell+1)$ and we see that
$$\frac{c(t,\ell)}{c(t,0)}=\frac{\prod_{j=1}^{\ell}(t-j+1)}{\prod_{j=1}^{\ell}(t+j)}\le1,$$ which implies that $c(t,\ell)\le c(t,0)$ for all $0\le\ell\le t$. Consequently, using \eqref{evencoeff} and the fact that $\binom{2t}{t} \le \frac{4^t}{\sqrt{\pi t}}$, we have
\begingroup
\allowdisplaybreaks
\begin{align*}
	\left | A_{k}(2t) \right | & \le  \frac{1}{\pi \sqrt{k}} \Bigl(\frac{k}{4}\Bigr)^t \binom{2t+1}{t}(t+1) \left | \sum_{\ell=0}^{t}\frac{(-1)^{\ell}(\pi\sqrt{k})^{2\ell+1}}{(2\ell+1)!} \right| 
	 \le \frac{k^{t-1/2}(2t+1)}{\pi^{3/2}\sqrt{t}}\sinh(\pi\sqrt{k}),
\end{align*}
\endgroup 
and so
\begingroup
\allowdisplaybreaks
\begin{align}\nonumber
\sum_{t\ge \frac{N+1}{2}}^{} \frac{\left |A_{k}(2t)\right |} {n^{t}}&\le \frac{k^{-\frac{1}{2}}\sinh(\pi \sqrt{k})}{\pi^{3/2}}\sum_{t \ge \frac{N+1}{2}}^{}\frac{2t+1}{\sqrt{t}}\frac{k^t}{n^t}\\\nonumber
 &\le \frac{k^{\frac{N}{2}}\sinh(\pi \sqrt{k})}{\pi^{3/2}}n^{-\frac{N+1}{2}} \sum_{t\ge0}^{}\Biggl(2\sqrt{\frac{N+1}{2}}\sqrt{1+\frac{2t}{N+1}}+\frac{1}{\sqrt{t+\frac{N+1}{2}}}\Biggr) \Bigl(\frac{k}{n}\Bigr)^t \\\nonumber
  & \le \frac{k^{\frac{N}{2}}\sinh(\pi \sqrt{k})}{\pi^{3/2}}n^{-\frac{N+1}{2}} \sum_{t\ge0}^{}\Biggl(2\sqrt{\frac{N+1}{2}}\sqrt{1+t}+\frac{1}{\sqrt{t+1}}\Biggr) \Bigl(\frac{k}{n}\Bigr)^t \\\nonumber
   &\le \frac{k^{\frac{N}{2}}\sinh(\pi \sqrt{k})}{\pi^{3/2}}n^{-\frac{N+1}{2}} \Bigl({1+\sqrt{2(N+1)}}\Bigr) \sum_{t\ge0}^{} \Bigl(\frac{2k}{n}\Bigr)^t\\\label{evenestim1}
   &\le  \Biggl(\frac{2k^{\frac{N}{2}}\sinh(\pi \sqrt{k})}{\pi^{3/2}} \Bigl({1+\sqrt{2(N+1)}}\Bigr) \Biggr) n^{-\frac{N+1}{2}}=\widetilde{E}_1(N,k)n^{-\frac{N+1}{2}},
\end{align}
\endgroup
using the fact $\sqrt{t+1}\le 2^t$ for $t>0$ in the fourth step and $n\ge n(N,k)\ge 4k$ in the second last step and \eqref{errorevencase1} in the final step.

Similarly, we estimate 
$$\left | A_{k}(2t+1) \right | \le \frac{k^t (t+1)}{4^t 2\pi} \binom{2t+2}{t+1} \cosh(\pi \sqrt{k}) \le \frac{2k^t \sqrt{t+1}}{\pi^{3/2}}\cosh(\pi \sqrt{k}),$$
and consequently using \eqref{oddcoeff}, \eqref{erroroddcase1}, and $n\ge n(N,k)\ge 4k$, it follows that
\begingroup
\allowdisplaybreaks
\begin{align}\nonumber
	\sum_{t\ge \frac{N}{2}}^{}\frac{\left |A_{k}(2t+1) \right |}{n^{t+\frac{1}{2}}}&\le n^{-\frac{1}{2}} \frac{2\cosh(\pi \sqrt{k})}{\pi^{3/2}}\sum_{t\ge \frac{N}{2}}^{}\sqrt{t+1}\frac{k^t}{n^t}\\\label{oddestim1}
	&\le \frac{2k^{N/2}\cosh(\pi \sqrt{k})}{\pi^{3/2}}n^{-\frac{N+1}{2}} \sum_{t\ge 0}^{}\frac{k^t}{n^t}\sqrt{t+\frac{N}{2}+1}\le \widetilde{E}_2(N,k) n^{-\frac{N+1}{2}}.
\end{align}
\endgroup
Combining \eqref{evenestim1} and \eqref{oddestim1}, we get
\begingroup
\allowdisplaybreaks
\begin{align}\nonumber
	\left | \sum_{t\ge N+1}^{}\frac{A_{k}(t)}{n^{\frac{t}{2}}} \right |= \left |  \sum_{t\ge \frac{N+1}{2}}^{}\frac{A_{k}(2t)}{n^{t}} + \sum_{t\ge \frac{N}{2}}^{}\frac{A_{k}(2t+1)}{n^{t+\frac{1}{2}}} \right |&\le   \sum_{t\ge \frac{N+1}{2}}^{} \frac{\left |A_{k}(2t)\right |} {n^{t}} + \sum_{t\ge \frac{N}{2}}^{}\frac{\left |A_{k}(2t+1) \right |}{n^{t+\frac{1}{2}}}\\\label{errorestimcase1}
	&=\overline{E}_1(N,k) n^{-\frac{N+1}{2}}.
\end{align}
\endgroup
This concludes the proof of for the case $k>0$. Next, we proceed to the case where $k<0$. 

Let $k=-m$, $m \in \mathbb{Z}_{>0}$; i.e, $m=|k|$. Then we have
$$A_{-m}(t)=\frac{(-m)^t}{2^t}\sum_{\ell=0}^{\left\lfloor \frac{t+1}{2}\right \rfloor} \frac{(t+1-\ell)}{(t+1-2\ell)!}\binom{t+1}{\ell}\frac{\pi^{t-2\ell}}{m^{\ell}}. $$ In a similar way like in the case $k \in \mathbb{Z}_{>0}$, here we have the following bounds
$$\left | A_{-m}(2t+1) \right | \le  \frac{2m^t \sqrt{t+1}}{\pi^{3/2}}\cosh(\pi \sqrt{m}),\ \left | A_{-m}(2t) \right | \le  \frac{m^{t-\frac{1}{2}} (2t+1)}{\pi^{3/2} \sqrt{t}}\sinh(\pi \sqrt{m}),$$
which consequently implies that for $n\ge n(N,k)$
$$\sum_{t\ge \frac{N}{2}}\frac{\left |A_{k}(2t+1) \right |}{n^{t+\frac{1}{2}}} \le \widetilde{E}_2(N,k) n^{-\frac{N+1}{2}}\ \ \text{and}\ \  
\sum_{t\ge \frac{N+1}{2}}\frac{\left |A_{k}(2t) \right |}{n^{t}}\le \widetilde{E}_1(N,k) n^{-\frac{N+1}{2}}.$$
Analogous to the estimation done in \eqref{errorestimcase1}, in this case, we have also
$$	\left | \sum_{t\ge N+1}^{}\frac{A_{k}(t)}{n^{\frac{t}{2}}} \right |\le \overline{E}_1(N,k) n^{-\frac{N+1}{2}},$$
which concludes the proof.
\end{proof}

\begin{lemma}\label{seclem3}
	Let $n(N,k)$ be as in \eqref{n(N,k)} and $\widehat{E}_1(N,k)$ be as in \eqref{error2}. Then for $k\neq 0$, and $n\ge n(N,k)$, we have
	$$\frac{e^{\pi \sqrt{n+k}}}{8(n+k)} \overline{\mu}(n+k)^{-(N+1)}=\frac{e^{\pi \sqrt{n}}}{8n} O_{\le \widehat{E}_2(N)}\left(n^{-\frac{N+1}{2}}\right).$$
\end{lemma}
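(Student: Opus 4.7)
My plan is to extract the factor $\frac{e^{\pi\sqrt{n}}}{8n}\,n^{-(N+1)/2}$ from the left-hand side and bound the resulting quotient uniformly by $\widehat{E}_2(N,k)$. Using $\overline{\mu}(n+k)=\pi\sqrt{n+k}$ so that $\overline{\mu}(n+k)^{-(N+1)}=\pi^{-(N+1)}(n+k)^{-(N+1)/2}$, a direct manipulation shows that this quotient equals
$$Q(n,k) := \pi^{-(N+1)}\cdot e^{\pi(\sqrt{n+k}-\sqrt{n})}\cdot\Bigl(\frac{n}{n+k}\Bigr)^{(N+3)/2}.$$
The lemma therefore reduces to verifying $|Q(n,k)|\le\widehat{E}_2(N,k)$ for $n\ge n(N,k)$, which I would handle by a case split on the sign of $k$.

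For $k>0$, I would use the elementary identity $\sqrt{n+k}-\sqrt{n}=k/(\sqrt{n+k}+\sqrt{n})\le k/(2\sqrt{n})$. The threshold $n\ge\lceil k^2\pi^2/4\rceil$ built into $n(N,k)$ gives $\sqrt{n}\ge k\pi/2$, which forces $\pi(\sqrt{n+k}-\sqrt{n})\le 1$ and hence $e^{\pi(\sqrt{n+k}-\sqrt{n})}\le e\le 1+\pi$. Combined with the trivial bound $n/(n+k)<1$, this yields $|Q(n,k)|\le(1+\pi)/\pi^{N+1}$. For $k<0$, the exponential factor is automatically at most $1$, and the condition $n\ge 4|k|$ from $n(N,k)$ gives $n+k=n-|k|\ge 3n/4\ge n/2$, hence $n/(n+k)\le 2$ and $(n/(n+k))^{(N+3)/2}\le 2^{(N+3)/2}$, which delivers the required bound $2^{(N+3)/2}/\pi^{N+1}$.

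There is no genuine obstacle here: the proof is essentially bookkeeping, and the main content is simply that the constants appearing in the definition of $n(N,k)$ in \eqref{n(N,k)} have been selected precisely so that the exponential $e^{\pi(\sqrt{n+k}-\sqrt{n})}$ and the polynomial $(n/(n+k))^{(N+3)/2}$ admit the clean bounds used above.
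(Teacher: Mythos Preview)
Your proposal is correct and follows essentially the same approach as the paper: factor out $\frac{e^{\pi\sqrt n}}{8n}\,n^{-(N+1)/2}$, then bound the exponential and the rational factor separately according to the sign of $k$, using exactly the thresholds built into $n(N,k)$. The only cosmetic difference is that for $k>0$ you bound $e^{\pi(\sqrt{n+k}-\sqrt n)}\le e<1+\pi$ directly from $n\ge\lceil k^2\pi^2/4\rceil$, whereas the paper routes through the inequality $e^x<1+2x$ for $0<x<1$ together with $n\ge k^2$; both arrive at the same constant $(1+\pi)/\pi^{N+1}$.
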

\begin{proof}
Before we move on to estimate the error term, we note down the following well known Bernoulli's inequality
\begin{equation}\label{Bernoulli}
(1+x)^r\le 1+rx\ \ \text{for}\ \ 0\le r\le 1\ \ \text{and}\ \ x\ge -1.
\end{equation}
For $k>0$ and $n\ge n(N,k)\ge \max\{k^2,\frac{k^2\pi^2}{4}\}$, and using the fact that $e^x<1+2x$ for $0<x<1$, we bound the error term as
\begingroup
\allowdisplaybreaks
\begin{align*}
	\frac{e^{\pi \sqrt{n+k}}}{8(n+k)} \overline{\mu}(n+k)^{-(N+1)}&=\frac{e^{\pi \sqrt{n}}}{8n} \frac{e^{\pi \sqrt{n}(\sqrt{1+\frac{k}{n}}-1)}}{\pi^{N+1}n^{\frac{N+1}{2}}}\frac{1}{(1+\frac{k}{n})^{\frac{N+3}{2}}}\le \frac{e^{\pi \sqrt{n}}}{8n} \frac{e^{\pi \sqrt{n}(\sqrt{1+\frac{k}{n}}-1)}}{\pi^{N+1}n^{\frac{N+1}{2}}}\\ 
	&\hspace{-2 cm} \le  \frac{e^{\pi \sqrt{n}}}{8n} \frac{e^{\frac{k\pi}{2\sqrt{n}}}}{\pi^{N+1}n^{\frac{N+1}{2}}} \le  \frac{e^{\pi \sqrt{n}}}{8n} \Biggl(\frac{1+\pi}{\pi^{N+1}}\Biggr) n^{-\frac{N+1}{2}}=\widehat{E}_2(N)n^{-\frac{N+1}{2}},
\end{align*}
\endgroup
using \eqref{Bernoulli} in the third step and for $k<0$, we estimate in the following way
\begingroup
\allowdisplaybreaks
\begin{align*}
	\frac{e^{\pi \sqrt{n-k}}}{8(n-k)} \overline{\mu}(n-k)^{-(N+1)} & =  \frac{e^{\pi \sqrt{n}}}{8n} \frac{e^{\pi \sqrt{n}(\sqrt{1-\frac{k}{n}}-1)}}{\pi^{N+1}n^{-\frac{N+1}{2}}}\frac{1}{(1-\frac{k}{n})^{\frac{N+3}{2}}}\le \frac{e^{\pi \sqrt{n}}}{8n} \frac{2^{\frac{N+3}{2}}e^{\pi \sqrt{n}(\sqrt{1-\frac{k}{n}}-1)}}{\pi^{N+1}n^{-\frac{N+1}{2}}}\\ 
	& \le  \frac{e^{\pi \sqrt{n}}}{8n} \frac{2^{\frac{N+3}{2}}e^{-\frac{k\pi}{2\sqrt{n}}}}{\pi^{N+1}n^{-\frac{N+1}{2}}} \le  \frac{e^{\pi \sqrt{n}}}{8n}\widehat{E}_2(N) n^{-\frac{N+1}{2}},
\end{align*}
\endgroup
using \eqref{Bernoulli} in the third step. From the above two estimates done separately for both cases for $k$, we conclude the proof.
\end{proof}

\emph{Proof of Theorem \ref{thm1.1}}
Applying Lemmas \ref{seclem1}, \ref{seclem2}, and \ref{seclem3} to \eqref{lemma2} with \eqref{E(N,k)}, we get for $n\ge n(N,k)$,
\begin{align*}
\overline{p}(n+k)=\frac{e^{\pi \sqrt{n}}}{8n}\left(\sum_{t=0}^{N}\frac{A_k(t)}{n^{\frac t2}}+O_{\le E(N,k)}\left(n^{-\frac{N+1}{2}}\right)\right).
\end{align*}
This finishes the proof.
\qed 

Before we proceed to present the proof of Theorem \ref{thm1.2}, we recall the definition of Stirling number. Adopting the notation from \cite{GrahamKnuthPatashnik}, here $n \brace m$ denotes the Stirling number of second kind which counts the number of ways to partition a set of $n$ elements into $m$ nonempty subsets. Here we state three elementary facts about $n \brace m$. From \cite[Table 264 and equation (6.19)]{GrahamKnuthPatashnik}, we have
\begin{equation}\label{fact}
{n \brace m}=0\ \text{for}\ m>n\ \ \text{and}\ \ m!{n \brace m}=\sum_{k=0}^{m}\binom{m}{k}k^n (-1)^{m-k}.
\end{equation}

Using Theorem \ref{thm1.1}, in this section, we are now ready to prove Theorem \ref{thm1.2}. 

\emph{Proof of Theorem \ref{thm1.2}} Following the definition of the operator $\Delta^{r}_j$, it follows that
$$\Delta^r_{j}\left(\overline{p}\right)(n-j)=\sum_{m=0}^{r}(-1)^m\binom{r}{m}\overline{p}(n-m.j-j)=\sum_{m=0}^{r}(-1)^m\binom{r}{m}\overline{p}(n-(m+1)j),$$
and therefore, with $k\mapsto -(m+1)j$, by Theorem \ref{thm1.1}, we have for $n\ge \underset{0\le m\le r}{\max}\{n(N,-(m+1)j)\}$,
\begingroup
\allowdisplaybreaks
\begin{align}\nonumber
\Delta^r_{j}\left(\overline{p}\right)(n-j)&=\underset{m\equiv 0\ \left(\text{mod}\ 2\right)}{\sum_{m=0}^{r}}\binom{r}{m}\overline{p}(n-(m+1)j)-\underset{m\equiv 1\ \left(\text{mod}\ 2\right)}{\sum_{m=0}^{r}}\binom{r}{m}\overline{p}(n-(m+1)j)\\\nonumber
&\le \frac{e^{\pi\sqrt{n}}}{8n}\underset{m\equiv 0\ \left(\text{mod}\ 2\right)}{\sum_{m=0}^{r}}\binom{r}{m}\left(\sum_{t= 0}^{N}\frac{A_{-(m+1).j}(t)}{n^{\frac{t}{2}}}+\frac{E(N,-(m+1)j)}{n^{\frac{N+1}{2}}}\right)\\\nonumber
&\hspace{1 cm}-\frac{e^{\pi\sqrt{n}}}{8n}\underset{m\equiv 1\ \left(\text{mod}\ 2\right)}{\sum_{m=0}^{r}}\binom{r}{m}\left(\sum_{t= 0}^{N}\frac{A_{-(m+1).j}(t)}{n^{\frac{t}{2}}}-\frac{E(N,-(m+1)j)}{n^{\frac{N+1}{2}}}\right)\\\nonumber
&=\frac{e^{\pi\sqrt{n}}}{8n}\left(\sum_{m=0}^{r}(-1)^m\binom{r}{m}\sum_{t= 0}^{N}\frac{A_{-(m+1).j}(t)}{n^{\frac{t}{2}}}+\sum_{m=0}^{r}\binom{r}{m}\frac{E(N,-(m+1)j)}{n^{\frac{N+1}{2}}}\right)\\\label{neweqn3}
&=\frac{e^{\pi\sqrt{n}}}{8n}\left(\sum_{m=0}^{r}(-1)^m\binom{r}{m}\sum_{t= 0}^{N}\frac{A_{-(m+1).j}(t)}{n^{\frac{t}{2}}}+\frac{E(N,r,j)}{n^{\frac{N+1}{2}}}\right).
\end{align}
\endgroup
Similarly, it follows that
\begin{equation}\label{neweqn4}
\Delta^r_{j}\left(\overline{p}\right)(n-j)\ge \frac{e^{\pi\sqrt{n}}}{8n}\left(\sum_{m=0}^{r}(-1)^m\binom{r}{m}\sum_{t= 0}^{N}\frac{A_{-(m+1).j}(t)}{n^{\frac{t}{2}}}-\frac{E(N,r,j)}{n^{\frac{N+1}{2}}}\right).
\end{equation}
Hence, combining \eqref{neweqn3} and \eqref{neweqn4}, for $n\ge n(N,r,j)$, we get
\begin{equation}\label{neweqn5}
\Delta^r_{j}\left(\overline{p}\right)(n-j)= \frac{e^{\pi\sqrt{n}}}{8n}\left(\sum_{m=0}^{r}(-1)^m\binom{r}{m}\sum_{t= 0}^{N}\frac{A_{-(m+1).j}(t)}{n^{\frac{t}{2}}}+O_{\le E(N,r,j)}\left(n^{-\frac{N+1}{2}}\right)\right).
\end{equation}
Next, following \eqref{A_{k}(t)} with $k\mapsto -(m+1)j$, we simplify
\begingroup
\allowdisplaybreaks
\begin{align}\nonumber 
&\sum_{m=0}^{r}(-1)^m\binom{r}{m}\sum_{t= 0}^{N}\frac{A_{-(m+1).j}(t)}{n^{\frac{t}{2}}}\\\nonumber
&=\sum_{m=0}^{r} (-1)^m \binom{r}{m} \sum_{t=0}^{N} \Bigl(\frac{-(m+1)j}{2\sqrt{n}}\Bigr)^t \sum_{\ell=0}^{\lfloor \frac{t+1}{2} \rfloor} \frac{(t+1-\ell)}{(t+1-2\ell)!}\binom{t+1}{\ell}\frac{\pi^{t-2\ell}}{((m+1)j)^{\ell}}\\\nonumber
&=\sum_{t=0}^{N}\left(\Bigl(-\frac{1}{2}\Bigr)^t \sum_{\ell=0}^{\lfloor \frac{t+1}{2} \rfloor} \binom{t+1}{\ell}\frac{j^{t-\ell}\pi^{t-2\ell}(t+1-\ell)}{(t+1-2\ell)!} \sum_{m=0}^{r}(-1)^m \binom{r}{m}(m+1)^{t-\ell}\right)\frac{1}{n^{\frac t2}}\\\label{neweqn6}
&=\sum_{t=0}^{N}\frac{A_j(t,r)}{n^{t/2}}.
\end{align}
\endgroup 
We observe that for $0\le t\le r-1$,
\begingroup
\allowdisplaybreaks
\begin{align}\label{ident}
\sum_{m=0}^{r}\!(-1)^m \binom{r}{m}(m+1)^{t-\ell}&=\!\!\sum_{m=0}^{r}(-1)^m \binom{r}{m}\sum_{s=0}^{t-\ell}\binom{t-\ell}{s}m^s\nonumber\\
&=(-1)^r r!\sum_{s=0}^{t-\ell}\binom{t-\ell}{s} {s\brace r},
\end{align}
\endgroup
using \eqref{fact} in the final step. As $\ell \ge 0$, so for $s\le t-\ell<r$, using \eqref{fact}, we have
$$\sum_{s=0}^{t-\ell}\binom{t-\ell}{s} {s\brace r}=0,$$
Consequently, \eqref{neweqn6} reduces to
$$\sum_{m=0}^{r}(-1)^m\binom{r}{m}\sum_{t= 0}^{N}\frac{A_{-(m+1).j}(t)}{n^{\frac{t}{2}}}=\sum_{t=r}^{N}\frac{A_j(t,r)}{n^{t/2}}.$$
Hence, with \eqref{neweqn5}, we finish the proof of \eqref{eqnthm1.2.}. 

Next, to prove \eqref{neweqn2}, from \eqref{ident}, we see that for $t=r$ and $r\in \mathbb{N}$,
$$\sum_{m=0}^{r}(-1)^m \binom{r}{m}(m+1)^{r-\ell}=\begin{cases}
0,&\quad \text{if}\ \ell\ge 1\ \text{and}\ \ell=0, 0\le s<r,\\
(-1)^r r!,&\quad \text{if}\ (\ell,s)=(0,r).
\end{cases}$$ 
Hence, with the only choice $(\ell,t)=(0,r)$, from \eqref{neweqn6}, it follows that
\begin{equation*}
A_j(r,r)=(-1)^rr! \Bigl(-\frac{1}{2}\Bigr)^r \frac{j^{r}\pi^{r}(r+1)}{(r+1)!}=\left(\frac{\pi j}{2}\right)^r.
\end{equation*}
This concludes the proof of the theorem.
\qed


\section{Conclusion}\label{conclusion}
We conclude this paper by raising the following questions with a brief background on each of the problem. 
\begin{enumerate}
\item Following up the work of Odlyzko \cite{Odlyzko} on the positivity of $\Delta^r(p)(n)$ for $n\ge n(r)$ with $n(r)\sim \frac{6}{\pi^2}r^2(\log r)^2$ as $r\rightarrow\infty$, we ask the following:
\begin{problem}\label{prob1}
What is the asymptotic growth of $n(r)$ as $r\rightarrow\infty$ such that for all $n\ge n(r)$, $\Delta^r(\overline{p})(n)\ge 0$? Can one obtain an exact (and/or simplified) formula for such $n(r)$ as in Almkvist,  Knessl and Keller's work \cite{Almkvist,KnesslKeller} for the overpartition function?	
\end{problem}
\item  Let $\text{PL}(n)$ denotes the plane partition function introduced by MacMahon \cite{M}, and the generating function is given by 
$$\sum_{n=0}^{\infty}\textnormal{PL}(n)q^n=\prod_{n=1}^{\infty}\frac{1}{\left(1-q^n\right)^n}.$$
Knessl \cite{Knessl} studied the positivity of $\Delta^r(\text{PL})(n)$, showed that $\Delta^r(\text{PL})(n)>0$ for $n\ge n(r)$ with $$n(r)\sim \frac{2}{3}\left(3\zeta(3)\right)^{-\frac{1}{2}}(r\log r)^{\frac{3}{2}}\ \text{as}\ r\rightarrow\infty.$$
Recently, thanks to the seminal work of Ono, Pujahari and Rolen \cite{OPR} on the asymptotic expansion of $\text{PL}(n)$, a substantial modification of Wright's formula  \cite{Wr}, we have an explicit estimate of the error bound for the asymptotic expansion of $\text{PL}(n)$. We raise the following problem for $\text{PL}(n)$.  
\begin{problem}\label{prob2}
For $r\in \mathbb{N}$, what is the asymptotic growth of $\Delta^r_j(\textnormal{PL})(n)$ as $n\to \infty$?
\end{problem}
\end{enumerate}

	\begin{center}
		\textbf{Acknowledgements}
	\end{center}
The author would like to thank the anonymous referee for his/her comments that improved
the paper substantially. This work has been carried out as a postdoctoral fellow at school of mathematical sciences, National Institute of Science Education and Research,
Bhubaneswar. The author would like to thank the institute for its hospitality and support.



\end{document}